\newtheorem{theorem}{Theorem}[section]
\newtheorem{lemma}[theorem]{Lemma}
\newtheorem{defi}[theorem]{Definition}
\newtheorem{cor}[theorem]{Corollary}
\DeclareMathOperator{\im}{im}
\DeclareMathOperator{\ind}{ind}
\title{A Remark on Bifurcation of Fredholm Maps}
\author{Nils Waterstraat}
\begin{document}
\date{}
\maketitle

\footnotetext[1]{{\bf 2010 Mathematics Subject Classification: Primary 58E07; Secondary 58J20, 19L20}}

\begin{abstract}
\noindent We modify an argument for multiparameter bifurcation of Fredholm maps by Fitzpatrick and Pejsachowicz to strengthen results on the topology of the bifurcation set. Furthermore, we discuss an application to families of differential equations parameterised by Grassmannians. 
\end{abstract}

\section{Introduction}
Let $X$ and $Y$ be Banach spaces, $\mathcal{O}\subset X$ an open neighbourhood of $0\in X$ and $\Lambda$ a compact topological space. A \textit{family of $C^1$-Fredholm maps} is a map $f:\Lambda\times\mathcal{O}\rightarrow Y$ such that each $f_\lambda:=f(\lambda,\cdot):\mathcal{O}\rightarrow Y$ is $C^1$ and the Fr\'echet derivatives $D_uf_\lambda\in\mathcal{L}(X,Y)$ are Fredholm operators of index $0$ which depend continuously on $(\lambda,u)\in\Lambda\times \mathcal{O}$ with respect to the operator norm on the space of bounded linear operators $\mathcal{L}(X,Y)$. In what follows, we assume that $f(\lambda,0)=0$ for all $\lambda\in\Lambda$, and we say that $\lambda^\ast\in\Lambda$ is a \textit{bifurcation point} if every neighbourhood of $(\lambda^\ast,0)$ in $\Lambda\times\mathcal{O}$ contains an element $(\lambda,u)$ such that $u\neq 0$ and $f(\lambda,u)=0$. By the implicit function theorem, the linear operator 

\[L_{\lambda^\ast}:=D_0f_{\lambda^\ast}:X\rightarrow Y\]
is non-invertible if $\lambda^\ast$ is a bifurcation point, and so a non-trivial kernel of some $L_\lambda=D_0f_\lambda$ is a necessary assumption for the existence of a bifurcation point. Lots of efforts have been made to obtain sufficient criteria for the existence of bifurcation points by topological methods (cf. e.g. \cite{Ize}, \cite{Alexander}, \cite{Bartsch}). Here we focus on the articles \cite{JacoboTMNAI} and \cite{JacoboTMNAII} of Pejsachowicz, which are the state of the art of a line of research on which he has worked for at least three decades (cf. \cite{JacoboK}, \cite{FiPejsachowiczII}, \cite{JacoboTMNA0}, \cite{JacoboC}, \cite{JacoboJFPTA}). In order to explain these results briefly, let us recall that every family $\{L_\lambda\}_{\lambda\in\Lambda}$ of Fredholm operators has an \textit{index bundle} which is a $K$-theory class $\ind L$ in $\widetilde{KO}(\Lambda)$ that generalises the integral index for Fredholm operators to families. Moreover, Atiyah introduced in the sixties the $J$-group $J(\Lambda)$ and a natural epimorphism $J:\widetilde{KO}(\Lambda)\rightarrow J(\Lambda)$, which is called the generalised $J$-homomorphism. Pejsachowicz' bifurcation theorem \cite{JacoboTMNAI} reads as follows:

\begin{theorem}\label{JacoboCW}
Let $f:\Lambda\times\mathcal{O}\rightarrow Y$ be a family of $C^1$-Fredholm maps, parameterised by a connected finite CW-complex $\Lambda$, such that $f(\lambda,0)=0$ for all $\lambda\in\Lambda$. If there is some $\lambda_0\in\Lambda$ such that $L_{\lambda_0}$ is invertible and $J(\ind L)\neq 0$, then the family $f$ has at least one bifurcation point from the trivial branch $\Lambda\times\{0\}$.
\end{theorem}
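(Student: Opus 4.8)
The plan is a proof by contradiction: assuming that $f$ has no bifurcation point I will reduce the equation $f(\lambda,u)=0$ near the trivial branch to a finite-dimensional one modelled on the index bundle, and then contradict $J(\ind L)\neq 0$ by producing a fibre homotopy trivialisation of the associated sphere bundle. So suppose that no $\lambda^\ast\in\Lambda$ is a bifurcation point. Then each $(\lambda^\ast,0)$ has a product neighbourhood on which $f$ vanishes only on $\Lambda\times\{0\}$, and extracting a finite subcover of $\Lambda$ produces an $\varepsilon>0$ such that $f(\lambda,u)\neq 0$ whenever $\lambda\in\Lambda$ and $0<\|u\|\leq\varepsilon$.

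\emph{Parametrised Lyapunov--Schmidt reduction.} Using compactness of $\Lambda$, I would pick a finite-dimensional subspace $V\subseteq Y$ with $V+\im L_\lambda=Y$ for every $\lambda$, a closed splitting $Y=V\oplus Z$ with projections $P_V,P_Z$, and observe that the subspaces $E_\lambda:=L_\lambda^{-1}(V)\subseteq X$ all have dimension $\dim V$ and assemble into a vector bundle $E\to\Lambda$ which is the standard model of the index bundle, so that $\ind L=[E]-[\underline{V}]\in\widetilde{KO}(\Lambda)$. Since $P_ZL_\lambda\colon X\to Z$ is surjective with kernel $E_\lambda$, choosing a continuously varying complement $X_\lambda$ of $E_\lambda$ in $X$ and applying the implicit function theorem to $(\lambda,w,v)\mapsto P_Zf(\lambda,w+v)$ (with $w\in E_\lambda$, $v\in X_\lambda$) yields, uniformly over the compact space $\Lambda$, a continuous family of $C^1$ maps $\phi_\lambda$ with $\phi_\lambda(0)=0$ and $D_0\phi_\lambda=0$ whose graphs are exactly the small zeros of $P_Zf(\lambda,\cdot)$. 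The reduced map $g(\lambda,w):=P_Vf(\lambda,w+\phi_\lambda(w))$ is then a family of $C^1$ maps from a neighbourhood of the zero section of $E$ into $V$, with $g(\lambda,0)=0$, whose small zeros correspond bijectively to those of $f$; hence $g$ inherits from $f$ the absence of bifurcation points. A short differentiation gives $D_0g_\lambda=L_\lambda|_{E_\lambda}\colon E_\lambda\to V$, which for $\lambda=\lambda_0$ is an isomorphism because $L_{\lambda_0}$ is invertible.

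\emph{Finite-dimensional obstruction.} Fixing a fibre metric on $E$, the same compactness argument applied to $g$ gives $\rho>0$ with $g(\lambda,w)\neq 0$ for $0<\|w\|\leq\rho$, so radial projection turns $g$ into a fibre-preserving map $h$ from the radius-$\rho$ sphere bundle $S(E)$ to the trivial bundle $\Lambda\times S(V)$. Since $J(\ind L)\neq 0$ forces $\ind L\neq 0$, hence $\dim V\geq 1$, on each fibre $h$ is a self-map of the sphere $S^{\dim V-1}$; its degree is locally constant, hence constant as $\Lambda$ is connected, and the straight-line homotopy from $g_{\lambda_0}$ to $D_0g_{\lambda_0}$ (which stays nonzero on a sufficiently small sphere because $D_0g_{\lambda_0}$ is an isomorphism) shows that this degree equals $\pm 1$. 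A fibre-preserving map that has degree $\pm1$ on each fibre is a homotopy equivalence on each fibre, so by Dold's theorem $h$ is a fibre homotopy equivalence; therefore $S(E)$ is (stably) fibre homotopy trivial, so the class $[E]-[\underline{V}]\in\widetilde{KO}(\Lambda)$ lies in the kernel of $J$, i.e. $J(\ind L)=0$ — contradicting the hypothesis.

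\emph{Main obstacle.} The substantive part is the parametrised reduction: globalising the local implicit function theorem over $\Lambda$ — choosing the complements $X_\lambda$ continuously and obtaining a uniform domain for $\phi$ — and verifying that the linearisation of the reduced map is precisely $L_\lambda|_{E_\lambda}$, so that the reduced problem carries exactly the index bundle $\ind L$. Once the problem has been pushed into the finite-dimensional bundle $E$, the remaining homotopy theory (the degree argument, Dold's theorem, and the definition of the $J$-homomorphism) is routine.
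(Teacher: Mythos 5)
The paper does not prove Theorem~\ref{JacoboCW}: it is quoted from Pejsachowicz \cite{JacoboTMNAI} and used as a black box (notably in Part~2 of the proof of Theorem~\ref{main}), so there is no in-paper proof to compare against. Your argument --- a parametrised Lyapunov--Schmidt reduction modelled on the index bundle, followed by Dold's fibre-homotopy theorem applied to the induced map of sphere bundles --- is the standard route to this result in the Fitzpatrick--Pejsachowicz line of work, and it is essentially correct. The reduction is set up properly: $E_\lambda=L_\lambda^{-1}(V)$ is exactly the paper's $E(L,V)$, the chain rule gives $D_0g_\lambda=L_\lambda|_{E_\lambda}\colon E_\lambda\to V$, and at $\lambda_0$ this is an isomorphism since $L_{\lambda_0}$ is; Dold's theorem over the connected finite CW-complex $\Lambda$ then only needs the homotopy equivalence on that single fibre (the degree-constancy step is superfluous), and yields stable fibre homotopy triviality of $E$ versus $\Theta(V)$, i.e.\ $J(\ind L)=0$, the contradiction.

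Two points a full write-up should make explicit, though neither is a genuine gap: (a) the existence of a continuously varying closed complement $X_\lambda$ of $E_\lambda$ in the Banach space $X$ --- this follows because a finite-rank subbundle of a trivial Banach bundle over a paracompact base admits a complementary closed subbundle via a continuous family of bounded projections, but it is not automatic from Hahn--Banach pointwise; and (b) the uniformity over $\Lambda$ of the domain $\{\|w\|\leq\rho\}$ for the implicitly defined $\phi_\lambda$, which requires the quantitative form of the implicit function theorem together with compactness, and in turn is what makes the radius-$\rho$ sphere bundle $S(E)$ globally defined. You also need to observe, as you do, that one may enlarge $V$ to ensure $\dim V\geq 1$ so that the fibre spheres are nonempty. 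With these details filled in, the argument is complete.
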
 
\noindent Let us point out that the non-triviality of $J(\ind L)$ can be obtained by characteristic classes, what paves the way for obtaining bifurcation invariants by methods from global analysis. Pejsachowicz has obtained striking results for bifurcation of families of nonlinear elliptic boundary value problems along these lines in \cite{JacoboTMNAI}.\\
The paper \cite{JacoboTMNAII} deals with the question if $J(\ind L)$ not only provides information about the existence of a single bifurcation point, but if we can also get results about the dimension and topology of the set of all bifurcation points $B(f)\subset \Lambda$ of $f$. Let us recall that the \textit{covering dimension} $\dim(\mathcal{X})$ of a topological space $\mathcal{X}$ is the minimal value of $n\in\mathbb{N}$ such that every finite open cover of $\mathcal{X}$ has a finite open refinement in which no point is included in more than $n+1$ elements. In what follows, we denote for $k\in\mathbb{N}$ by $w_k(\ind L)\in H^k(\Lambda;\mathbb{Z}_2)$ the Stiefel-Whitney classes and by $q_k(\ind L)\in H^{2(p-1)k}(\Lambda;\mathbb{Z}_p)$ the Wu classes of the index bundle $\ind L\in\widetilde{KO}(\Lambda)$, where we assume in the latter case that $\ind L$ is orientable. Pejsachowicz' bifurcation theorem \cite{JacoboTMNAII}, which is based on his joint paper \cite{FiPejsachowiczII} with Fitzpatrick, reads as follows:

\begin{theorem}\label{mainJacobo}
Let $\Lambda$ be a compact connected topological manifold of dimension $n\geq 2$ and let $f:\Lambda\times\mathcal{O}\rightarrow Y$ be a continuous family of $C^1$-Fredholm maps verifying $f(\lambda,0)=0$ and such that there is some $\lambda_0\in\Lambda$ for which $L_{\lambda_0}$ is invertible.

\begin{enumerate}
	\item[(i)] If $\Lambda$ and $\ind L$ are orientable and, for some odd prime $p$, there is $1\leq k\leq\frac{n-1}{2(p-1)}$ such that $q_k(\ind L)\neq 0\in H^{2(p-1)k}(\Lambda;\mathbb{Z}_p)$, then the covering dimension of the set $B(f)$ is at least $n-2(p-1)k$.
	\item[(ii)] If $w_k(\ind L)\neq 0\in H^k(\Lambda;\mathbb{Z}_2)$ for some $1\leq k\leq n-1$, then the dimension of $B(f)$ is at least $n-k$.
\end{enumerate}
Moreover, either the set $B(f)$ disconnects $\Lambda$ or it cannot be deformed in $\Lambda$ into a point.
\end{theorem}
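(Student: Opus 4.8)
The plan is to derive every assertion from Pejsachowicz' existence theorem (Theorem~\ref{JacoboCW}), the bridge being the following \emph{localisation principle}: if $g\colon K\to\Lambda$ is a continuous map from a connected finite CW--complex with $g(K)\cap B(f)=\emptyset$ and $\lambda_0\in g(K)$, then the pulled--back family $(g^\ast f)(\kappa,u):=f(g(\kappa),u)$ is again a family of $C^1$--Fredholm maps with $g^\ast f(\kappa,0)=0$, it has \emph{no} bifurcation point (any bifurcation point of $g^\ast f$ would be carried by $g$ into $B(f)$), and $(g^\ast L)_{\kappa_0}=L_{\lambda_0}$ is invertible at every $\kappa_0\in g^{-1}(\lambda_0)$. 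By Theorem~\ref{JacoboCW} this forces $J(\ind(g^\ast L))=0$, and since $\ind(g^\ast L)=g^\ast\ind L$, this bundle is stably fibre homotopy trivial. Hence, by Thom's theorem on the vanishing of the Stiefel--Whitney classes of fibre homotopy trivial bundles, together with its mod~$p$ analogue for the Wu classes (as used in \cite{FiPejsachowiczII}, \cite{JacoboTMNAII}), $g^\ast w_k(\ind L)=0$ for all $k$, and $g^\ast q_k(\ind L)=0$ for all $k$ whenever $\ind L$ is orientable. It therefore suffices, in each case, to exhibit such $K$ and $g$ on which the relevant characteristic class of $\ind L$ survives.

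For (i) and (ii) set $B:=B(f)$, a compact subset of $\Lambda$ with $B\neq\Lambda$ (as $\lambda_0\notin B$), and argue by contradiction, assuming $\dim B<n-2(p-1)k$ in case (i) and $\dim B<n-k$ in case (ii). Work over $R=\mathbb{Z}_2$ in case (ii) and over $R=\mathbb{Z}_p$ in case (i), where the orientability of $\Lambda$ guarantees $R$--orientability. Alexander duality on the closed $R$--oriented $n$--manifold $\Lambda$ gives $H_s(\Lambda,\Lambda\setminus B;R)\cong\check H^{\,n-s}(B;R)$, which vanishes whenever $n-s>\dim B$ since the covering dimension bounds the $R$--cohomological dimension; over the field $R$ the same vanishing passes to $H^s(\Lambda,\Lambda\setminus B;R)$ by universal coefficients. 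As the degree in question --- $k$ in (ii), $2(p-1)k$ in (i) --- is now $\le n-1-\dim B$, the long exact sequence of the pair shows $H^\ast(\Lambda;R)\to H^\ast(\Lambda\setminus B;R)$ is injective in that degree, so $w_k(\ind L)$, respectively $q_k(\ind L)$, remains non--zero on $\Lambda\setminus B$; and since the assumed bound forces $\dim B\le n-2$ in both cases, the same vanishing in degrees $s=0,1$ yields $H_0(\Lambda\setminus B;R)\cong H_0(\Lambda;R)$, so that $\Lambda\setminus B$ is connected. To finish I would detect the surviving class on a finite complex: $\Lambda\setminus B$ is an open subset of a manifold, hence of the homotopy type of a CW--complex, and over the field $R$ a non--zero cohomology class is detected by a homology class which, being supported on finitely many cells, already restricts non--trivially to some connected finite CW--subcomplex $P$; attaching to $P$ an arc running inside the connected set $\Lambda\setminus B$ from a point of $P$ to $\lambda_0$ yields $K:=P\cup(\text{arc})$ and a map $g\colon K\to\Lambda$ with $g(K)\subset\Lambda\setminus B$, $\lambda_0\in g(K)$, and the class still non--zero on $K$ --- in contradiction with the localisation principle.

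For the last assertion, suppose $B$ neither disconnects $\Lambda$ nor can be deformed in $\Lambda$ into a point, i.e.\ $\Lambda\setminus B$ is connected and the inclusion $\iota\colon B\hookrightarrow\Lambda$ is null--homotopic, and again aim for a contradiction with the localisation principle. Since $\iota$ is null--homotopic, $\iota^\ast\colon H^q(\Lambda;R)\to\check H^{\,q}(B;R)$ vanishes for $q\ge1$; feeding this into the commutative square relating $\iota^\ast$, the Alexander duality isomorphism $\check H^{\,q}(B;R)\xrightarrow{\,\sim\,}H_{n-q}(\Lambda,\Lambda\setminus B;R)$, the Poincaré duality isomorphism $\frown[\Lambda]\colon H^q(\Lambda;R)\xrightarrow{\,\sim\,}H_{n-q}(\Lambda;R)$, and the natural map $H_{n-q}(\Lambda;R)\to H_{n-q}(\Lambda,\Lambda\setminus B;R)$, one finds that $H_s(\Lambda;R)\to H_s(\Lambda,\Lambda\setminus B;R)$ is zero for $s\le n-1$; equivalently, $H^s(\Lambda;R)\to H^s(\Lambda\setminus B;R)$ is injective for $s\le n-1$. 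Under the hypotheses of the theorem the relevant class --- $w_k(\ind L)$ of degree $k\le n-1$ with $R=\mathbb{Z}_2$, or $q_k(\ind L)$ of degree $2(p-1)k\le n-1$ (using the bound on $k$) with $R=\mathbb{Z}_p$ --- is non--zero, hence by naturality survives on the connected set $\Lambda\setminus B$, and the finite--complex--plus--arc construction of the previous paragraph once more produces the contradicting family.

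I expect the work to lie in the topology rather than in the analysis, which is entirely absorbed into Theorem~\ref{JacoboCW}. The delicate points are: running Alexander--Poincaré duality for an arbitrary closed subset $B$ of a topological $n$--manifold --- in particular the vanishing of $\check H^{\,q}(B;R)$ above the covering dimension and the compatibility of the duality isomorphisms with restriction along $\iota$ in the commutative square above --- and the passage from a characteristic class on the non--compact, possibly non--triangulable open set $\Lambda\setminus B$ to a genuine family over a connected finite CW--complex through the point $\lambda_0$, for which the homotopy--CW property of open subsets of manifolds, the finite support of homology classes, and the arc--attaching device are the enabling tools.
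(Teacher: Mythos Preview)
Your argument is correct and rests on the same three ingredients as the paper's proof in Section~2.2 (which, though stated for Theorem~\ref{main}, is organised precisely as a proof of Theorem~\ref{mainJacobo}): the localisation principle via Theorem~\ref{JacoboCW}, Poincar\'e--Lefschetz/Alexander duality for the pair $(\Lambda,\Lambda\setminus B)$, and the compact support of homology to pass to a finite complex. The difference is in the packaging. The paper makes an explicit case split on whether $\Lambda\setminus B$ is connected; in the connected case it works \emph{directly}, taking the Poincar\'e dual $\eta$ of a class $\alpha$ pairing non-trivially with $q_k(\ind L)$ and showing, via the duality diagram, that $i^\ast\eta\neq 0$ in $\check H^{n-2(p-1)k}(B(f);\mathbb{Z}_p)$. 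This single non-vanishing yields at once the dimension bound \emph{and} the fact that the inclusion $B(f)\hookrightarrow\Lambda$ is not null-homotopic, i.e.\ the second alternative of the dichotomy. You instead run both parts by contradiction: for (i)--(ii) you assume $\dim B$ too small, use vanishing of $\check H^{>\dim B}(B;R)$ plus duality to force the characteristic class to survive on $\Lambda\setminus B$, and derive connectedness of the complement from the same vanishing rather than from a case split; for the final clause you separately assume $\iota$ null-homotopic and repeat. Both routes arrive at the same finite-complex contradiction with Theorem~\ref{JacoboCW}; the paper's is a little more economical because it proves $i^\ast\eta\neq 0$ once and reads off both conclusions, whereas you traverse the duality diagram twice. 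A minor point: your ``CW model of $\Lambda\setminus B$ plus finite subcomplex plus attached arc'' is slightly fussier than the paper's move of invoking compact support of homology to obtain a map $g\colon P\to\Lambda\setminus B$ from a compact connected complex and then homotoping $g$ inside the connected set $\Lambda\setminus B$ so that $\lambda_0\in\im g$; the latter avoids juggling a homotopy equivalence and a literal subcomplex.
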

\noindent Note that if $\ind L$ is non-orientable, and so (i) cannot be applied, then $w_1(\ind L)\neq 0$ and we obtain by (ii) the strongest result that Theorem \ref{mainJacobo} can yield. Moreover, as in \cite{JacoboTMNAI}, the characteristic classes of $\ind L$ in Theorem \ref{mainJacobo} can be computed for families of elliptic boundary value problems by methods from global analysis.\\
A little downside of the final statement on the topology of $B(f)$ in Theorem \ref{mainJacobo} is that there is a priori no knowledge about which of the alternatives hold. The aim of this short article is to show that some simple modifications in Pejsachowicz' proof of Theorem \ref{mainJacobo} show that actually $B(f)$ is never a contractible topological space. Note that this is slightly weaker than the second alternative in Theorem \ref{mainJacobo} as every contractible subspace of $\Lambda$ can also be deformed in $\Lambda$ into a point.\\
After having stated and proved our version of Theorem \ref{mainJacobo} in the following section, we discuss an application to families of boundary value problems of ordinary differential equations.


\section{Theorem and Proof}

\subsection{Statement of the Theorem}
We denote by $\Phi(X,Y)$ the subspace of $\mathcal{L}(X,Y)$ consisting of all Fredholm operators, and so the derivatives $L_\lambda=D_0f_\lambda:X\rightarrow Y$ define a family $L:\Lambda\rightarrow\Phi(X,Y)$ parameterised by the compact space $\Lambda$. Atiyah and Jänich introduced independently the \textit{index bundle} for families of Fredholm operators (cf. e.g. \cite{AtiyahK}, \cite{indbundleIch}). Accordingly, there is a finite dimensional subspace $V\subset Y$ such that

\begin{align}\label{subspace}
\im L_\lambda+V=Y,\quad\lambda\in\Lambda.
\end{align}
Hence if we denote by $P$ the projection onto $V$, then we obtain a family of exact sequences

\[X\xrightarrow{L_\lambda}Y\xrightarrow{I_Y-P}\im(I_Y-P)\rightarrow 0,\]
and so a vector bundle $E(L,V)$ consisting of the union of the kernels of the maps $(I_Y-P)\circ L_\lambda$, $\lambda\in\Lambda$. As $L$ is Fredholm of index $0$, the dimensions of $E(L,V)$ and $V$ coincide. Hence, if $\Theta(V)$ stands for the product bundle $\Lambda\times V$, then we obtain a reduced $KO$-theory class

\[\ind(L):=[E(L,V)]-[\Theta(V)]\in\widetilde{KO}(\Lambda),\]
which is called the \textit{index bundle} of $L$. Let us mention for later reference that the index bundle is natural, i.e. if $\Lambda'$ is another compact topological space and $f:\Lambda'\rightarrow\Lambda$ a continuous map, then $f^\ast L:\Lambda'\rightarrow\Phi(X,Y)$ defined by $(f^\ast L)_\lambda=L_{f(\lambda)}$ is a family of Fredholm operators parameterised by $\Lambda'$ and 

\begin{align}\label{naturality}
\ind(f^\ast L)=f^\ast\ind(L). 
\end{align}
Two vector bundles $E$, $F$ over $\Lambda$ are called \textit{fibrewise homotopy equivalent} if there is a fibre preserving homotopy equivalence between their sphere bundles $S(E)$ and $S(F)$. Moreover, $E$ and $F$ are \textit{stably fibrewise homotopy equivalent} if $E\oplus\Theta(\mathbb{R}^m)$ and $F\oplus\Theta(\mathbb{R}^n)$ are fibrewise homotopy equivalent for some non-negative integers $m,n$. The quotient of $\widetilde{KO}(\Lambda)$ by the subgroup generated of all $[E]-[F]$ where $E$ and $F$ are stably fibrewise homotopy equivalent is denoted by $J(\Lambda)$ and the quotient map $J:\widetilde{KO}(\Lambda)\rightarrow J(\Lambda)$ is called the \textit{generalised J-homomorphism} (cf. \cite{AtiyahK}). Very little is known about $J(\Lambda)$ as these groups are notoriously hard to compute.\\
Note that in order to apply Theorem \ref{JacoboCW} it is not necessary to know $J(\Lambda)$. All what is needed is a way to decide if $J(\ind L)$ is non-trivial, which can be done by \textit{spherical characteristic classes}. We denote by $w_k(E)\in H^k(\Lambda;\mathbb{Z}_2)$ the \textit{Stiefel-Whitney classes} of real vector bundles $E$ over $\Lambda$, and by $q_k(E)\in H^{2(p-1)k}(\Lambda;\mathbb{Z}_p)$ the \textit{Wu classes} of (real) orientable vector bundles $E$ over $\Lambda$, where $p$ is an odd prime (cf. \cite{MiSta}). By the naturality of characteristic classes, if $\Lambda'$ is another compact space and $f:\Lambda'\rightarrow\Lambda$ is continuous, then

\begin{align}\label{naturalitychar}
w_k(f^\ast E)=f^\ast w_k(E),\qquad q_k(f^\ast E)=f^\ast q_k(E),\quad k\in\mathbb{N}. 
\end{align}
As $w_k$ and $q_k$ are invariant under addition of trivial bundles, they are well defined on $\widetilde{KO}(\Lambda)$. Moreover, they only depend on the stable fiberwise homotopy class of the associated sphere bundles and so they even factorise through $J(\Lambda)$. In particular, they detect elements having a non-trivial image under the $J$-homomorphism, i.e. $J(\ind L)\neq 0$ if $w_k(\ind L)$ or $q_k(\ind L)$ does not vanish for some $k$.\\
The aim of this article is to prove the following modification of Theorem \ref{mainJacobo}. 

\begin{theorem}\label{main}
Let $\Lambda$ be a compact connected topological manifold of dimension $n\geq 2$ and let $f:\Lambda\times\mathcal{O}\rightarrow Y$ be a continuous family of $C^1$-Fredholm maps verifying $f(\lambda,0)=0$. We denote by $L_\lambda:=D_0f_\lambda$ the Fr\'echet derivative of $f_\lambda$ at $0\in\mathcal{O}$ and we assume that there is some $\lambda_0\in\Lambda$ for which $L_{\lambda_0}$ is invertible.

\begin{enumerate}
	\item[(i)] If $\Lambda$ and $\ind L$ are orientable and, for some odd prime $p$, there is $1\leq k\leq\frac{n-1}{2(p-1)}$ such that $q_k(\ind L)\neq 0\in H^{2(p-1)k}(\Lambda;\mathbb{Z}_p)$, then the covering dimension of the set $B(f)$ is at least $n-2(p-1)k$.
	\item[(ii)] If $w_k(\ind L)\neq 0\in H^k(\Lambda;\mathbb{Z}_2)$ for some $1\leq k\leq n-1$, then the dimension of $B(f)$ is at least $n-k$.
\end{enumerate}
Moreover, the set $B(f)$ is not contractible to a point.
\end{theorem}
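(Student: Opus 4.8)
The plan is to follow Pejsachowicz' proof of Theorem~\ref{mainJacobo}, which establishes the two lower-bound statements (i) and (ii) and the dichotomy ``$B(f)$ disconnects $\Lambda$ or cannot be deformed to a point,'' and to observe that in fact the contradiction underlying that argument is already produced under the weaker hypothesis that $B(f)$ is \emph{contractible} (as a topological space, not just contractible inside $\Lambda$). The key point is that the non-triviality of $J(\ind L)$ enters only through the existence of a nonzero spherical characteristic class, and such a class is obstructed by the complement $\Lambda\setminus B(f)$: away from $B(f)$ the operators $L_\lambda$ are invertible (more precisely, on $\Lambda\setminus B(f)$ the family of derivatives along the trivial branch has no kernel, so the index bundle restricted there is stably fibrewise trivial), hence $w_k(\ind L)$ and $q_k(\ind L)$ restrict to zero on $\Lambda\setminus B(f)$. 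One then plays this against the $\check{\mathrm{C}}$ech–Alexander–Spanier long exact sequence of the pair $(\Lambda,\Lambda\setminus B(f))$ together with the fact, from the cited Fitzpatrick–Pejsachowicz parametrised Sard–Smale machinery, that the bifurcation set carries enough cohomology to support the nonzero class.

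Concretely, I would proceed as follows. First, recall (from \cite{FiPejsachowiczII} and \cite{JacoboTMNAII}) that if $\lambda^\ast\notin B(f)$ then $L_{\lambda^\ast}$ is invertible, so the restriction $\ind(L)|_{\Lambda\setminus B(f)}$ vanishes in $\widetilde{KO}(\Lambda\setminus B(f))$, and therefore $w_k(\ind L)|_{\Lambda\setminus B(f)}=0$ and $q_k(\ind L)|_{\Lambda\setminus B(f)}=0$ for all $k$. Second, in case (ii), since $\Lambda$ is a compact connected $n$-manifold one uses Poincaré–Lefschetz duality (with $\mathbb{Z}_2$ coefficients, so no orientation is needed) together with the exact sequence of the pair to conclude that a nonzero $w_k(\ind L)\in H^k(\Lambda;\mathbb{Z}_2)$ which dies on the complement must be pulled back from $H^k(\Lambda,\Lambda\setminus B(f);\mathbb{Z}_2)\cong \check{H}_{n-k}(B(f);\mathbb{Z}_2)$ (Alexander duality in the manifold), forcing $\check{H}_{n-k}(B(f);\mathbb{Z}_2)\neq 0$. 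Third, if $B(f)$ were contractible then $\check{H}_j(B(f);\mathbb{Z}_2)=0$ for all $j\geq 1$; since $1\leq k\leq n-1$ gives $1\leq n-k\leq n-1$, this is a contradiction. Case (i) is the same argument with $\mathbb{Z}_p$ coefficients and the Wu class $q_k$ in degree $2(p-1)k$, where the constraint $1\le k\le \frac{n-1}{2(p-1)}$ guarantees $1\le n-2(p-1)k\le n-1$, again contradicting contractibility. Finally, since by hypothesis $\ind L$ is always either orientable (so one of the $q_k$ can be invoked when the relevant hypothesis of (i) holds) or non-orientable, in which case $w_1(\ind L)\neq 0$ and case (ii) applies with $k=1$ — and $n\ge 2$ ensures $1\le n-1$ — in every situation covered by the theorem we land in a contradiction, so $B(f)$ cannot be contractible.

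I would present this as a short addendum to Pejsachowicz' proof rather than reproving the lower-bound estimates: (i) and (ii) are literally Theorem~\ref{mainJacobo}, so I would cite them, and only supply the new deduction that the class surviving in $\check{H}_\ast(B(f))$ (which is implicit in the duality step of his proof) already obstructs contractibility. The main obstacle I anticipate is bookkeeping the duality isomorphism correctly for a possibly non-triangulable topological manifold and for the Čech-type cohomology of the closed but possibly wild set $B(f)$: one must make sure that ``$w_k(\ind L)$ vanishes on the open complement'' genuinely produces a nonzero \emph{reduced Čech homology} class of $B(f)$ in positive degree, and that contractibility of $B(f)$ indeed kills Čech homology in those degrees (true for contractible spaces since Čech homology is homotopy invariant and reduced Čech homology of a point vanishes). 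Once that is pinned down, the contradiction is immediate and uniform across all cases, which is exactly the strengthening claimed: $B(f)$ is never contractible, even though a priori we still do not know which of Pejsachowicz' two original alternatives holds.
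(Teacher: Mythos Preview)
Your overall duality strategy is close in spirit to the paper's, but the very first step contains a genuine error: you assert that ``if $\lambda^\ast\notin B(f)$ then $L_{\lambda^\ast}$ is invertible,'' and from this deduce that $\ind(L)$, and hence $w_k(\ind L)$ and $q_k(\ind L)$, restrict to zero on $\Lambda\setminus B(f)$. That implication is the \emph{converse} of what the implicit function theorem gives, and it is false in general. A standard two--dimensional example is $f(\lambda,x,y)=\bigl(\lambda x - y(x^2+y^2),\ \lambda y + x(x^2+y^2)\bigr)$: here $L_\lambda=\lambda\cdot I$, so $L_0=0$ is not invertible, yet one checks that the only zero of $f_\lambda$ is $(0,0)$ for every $\lambda$, so $\lambda=0$ is \emph{not} a bifurcation point. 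Thus there is no reason for the characteristic classes of $\ind L$ to die on $\Lambda\setminus B(f)$, and your exact--sequence/duality argument never gets started.

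The paper obtains the vanishing you need not from the linear data but from the \emph{nonlinear} Theorem~\ref{JacoboCW}, and this is where the extra structure in the proof comes from. One cannot apply Theorem~\ref{JacoboCW} directly to $\Lambda\setminus B(f)$ (it is open, generally not a finite CW--complex, and it need not be connected or contain $\lambda_0$), so the proof splits into two cases. If $\Lambda\setminus B(f)$ is disconnected, Poincar\'e--Lefschetz duality with $\mathbb{Z}_2$ coefficients already gives $\check{H}^{n-1}(B(f);\mathbb{Z}_2)\neq 0$. If it is connected, one dualises the nonzero class $q_k(\ind L)$ (resp.\ $w_k$) to a homology class $\alpha$, uses compact support of homology to realise $\alpha$ via a map $g\colon P\to\Lambda\setminus B(f)$ from a compact connected CW--complex, and deforms $g$ so that $\lambda_0$ lies in its image. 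The pulled--back family $\overline{f}$ on $P$ then has an invertible point and \emph{no} bifurcation points, so Theorem~\ref{JacoboCW} forces $J(\ind\overline{L})=0$, whence $q_k(\ind\overline{L})=0$; naturality then yields the contradiction unless the dual class $i^\ast\eta$ survives in $\check{H}^{n-2(p-1)k}(B(f);\mathbb{Z}_p)$. In short, the ``characteristic class vanishes over the complement'' step you take for granted is precisely the nontrivial content supplied by Theorem~\ref{JacoboCW}, and arranging to apply it is what forces the case split and the auxiliary map $g$.
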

\noindent Let us point out once again that the novelty in our approach to this theorem is the result that $B(f)$ is never a contractible space.


\subsection{Proof of Theorem \ref{main}}
In this section we denote by $\check{H}^k(\mathcal{X};G)$ the \v{C}ech cohomology groups of a topological space $\mathcal{X}$ with respect to the abelian coefficient group $G$. Moreover, we use without further reference the fact that $\dim(\mathcal{X})\geq k$ if $\check{H}^k(\mathcal{X};G)\neq 0$ (cf. \cite[VIII.4.A]{Hurewicz}).\\
We now split the proof of Theorem \ref{main} into two parts depending on whether or not $\Lambda\setminus B(f)$ is connected.


\subsubsection*{Part 1: Proof of Theorem \ref{main} if $\Lambda\setminus B(f)$ is not connected}
If we assume that $\Lambda\setminus B(f)$ is not connected, then the reduced homology group $\tilde{H}_0(\Lambda \setminus B(f);\mathbb{Z}_2)$ is non-trivial as its rank is one less than the number of connected components of $\Lambda\setminus B(f)$. By the long exact sequence of reduced homology (cf. \cite[\S IV.6]{Bredon})

\begin{align*}
\ldots\rightarrow H_1(\Lambda,\Lambda\setminus B(f);\mathbb{Z}_2)\rightarrow\widetilde{H}_0(\Lambda\setminus B(f);\mathbb{Z}_2)\rightarrow\widetilde{H}_0(\Lambda;\mathbb{Z}_2)=0,
\end{align*}
where we use that $\Lambda$ is connected by assumption. Hence there is a surjective map 

\[H_1(\Lambda,\Lambda\setminus B(f);\mathbb{Z}_2)\rightarrow\widetilde{H}_0(\Lambda\setminus B(f);\mathbb{Z}_2)\] showing that $H_1(\Lambda,\Lambda\setminus B(f);\mathbb{Z}_2)$ is non-trivial. As $B(f)$ is closed, we obtain by Poincar\'e-Lefschetz duality (cf. \cite[Cor. VI.8.4]{Bredon}) an isomorphism

\begin{align*}
H_1(\Lambda,\Lambda\setminus B(f);\mathbb{Z}_2)\xrightarrow{\cong} \check{H}^{n-1}(B(f);\mathbb{Z}_2),
\end{align*}
and so $\check{H}^{n-1}(B(f);\mathbb{Z}_2)\neq 0$. Consequently, as $n\geq 2$, $B(f)$ is not contractible to a point and, moreover, we obtain $\dim B(f)\geq n-1$ which is greater or equal to any of the lower bounds stated in Theorem \ref{main}.\\
Note that, as we use $\mathbb{Z}_2$ coefficients, $\Lambda$ does not need to be orientable in this part of the proof and so the argument works under either of the assumptions (i) or (ii).

\subsubsection*{Part 2: Proof of Theorem \ref{main} if $\Lambda\setminus B(f)$ is connected}
We only give the proof under the assumption (i), as the argument for (ii) is very similar and even simpler since we do not need to take orientability into account. Let us point out that this second part of the proof follows quite closely \cite{JacoboTMNAII}. A similar argument can also be found in \cite{MaciejIch}.\\
As $\mathbb{Z}_p$ is a field, the duality pairing

\begin{align*}
\langle\cdot,\cdot\rangle:H^{2(p-1)k}(\Lambda;\mathbb{Z}_p)\times H_{2(p-1)k}(\Lambda;\mathbb{Z}_p)\rightarrow\mathbb{Z}_p
\end{align*}
is non-degenerate. Hence there is $\alpha\in H_{2(p-1)k}(\Lambda;\mathbb{Z}_p)$ such that $\langle q_k(\ind L),\alpha\rangle\neq 0\in\mathbb{Z}_p$. We let $\eta\in H^{n-2(p-1)k}(\Lambda;\mathbb{Z}_p)$ be the Poincar\'e dual of $\alpha$ with respect to a fixed $\mathbb{Z}_p$-orientation of $\Lambda$. According to \cite[Cor. VI.8.4]{Bredon}, there is a commutative diagram 

\begin{align*}
\xymatrix{&\check{H}^{n-2(p-1)k}(\Lambda;\mathbb{Z}_p)\ar[r]^{i^\ast}&\check{H}^{n-2(p-1)k}(B(f);\mathbb{Z}_p)\\
H_{2(p-1)k}(\Lambda\setminus B(f);\mathbb{Z}_p)\ar[r]^{j_\ast}&H_{2(p-1)k}(\Lambda;\mathbb{Z}_p)\ar[u]\ar[r]^(.4){\pi_\ast}&H_{2(p-1)k}(\Lambda,\Lambda\setminus B(f);\mathbb{Z}_p)\ar[u]
}
\end{align*}
where the vertical arrows are isomorphisms given by Poincar\'e-Lefschetz duality and the lower horizontal sequence is part of the long exact homology sequence of the pair $(\Lambda,\Lambda\setminus B(f))$. Because of the commutativity, the class $i^\ast\eta$ is dual to $\pi_\ast\alpha$ and we now assume by contradiction the triviality of the latter one.\\
Then, by exactness, there exists $\beta\in H_{2(p-1)k}(\Lambda\setminus B(f);\mathbb{Z}_p)$ such that $\alpha=j_\ast\beta$. Moreover, since homology is compactly supported (cf. \cite[Sect. 20.4]{May}), there is a compact connected CW-complex $P$ and a map $g:P\rightarrow\Lambda\setminus B(f)$ such that $\beta=g_\ast\gamma$ for some $\gamma\in H_{2(p-1)k}(P;\mathbb{Z}_p)$.\\
Let us recall that by assumption there is some $\lambda_0\in\Lambda$ such that $L_{\lambda_0}$ is invertible. As $\lambda_0\notin B(f)$ by the implicit function theorem and $\Lambda\setminus B(f)$ is connected, we can deform $g$ such that $\lambda_0$ belongs to its image. Clearly, this does not affect the property that $\beta=g_\ast\gamma$ and so we can assume without loss of generality that $\lambda_0\in\im g$. We now set $\overline{g}=j\circ g$ and consider

\[\overline{f}:P\times X\rightarrow Y,\quad \overline{f}(p,u)=f(\overline{g}(p),u).\] 
Then $\overline{f}(p,0)=0$ for all $p\in P$ and the linearisation of $\overline{f}_p$ at $0\in X$ is $\overline{L}_p=L_{\overline{g}(p)}$. Moreover, as $\lambda_0$ is in the image of $g$, there is some $p_0\in P$ such that $\overline{L}_{p_0}$ is invertible. Clearly, $\overline{g}$ sends bifurcation points of $\overline{f}$ to bifurcation points of $f$, and as $\overline{g}(P)\cap B(f)=\emptyset$, we see that the family $\overline{f}$ has no bifurcation points. Consequently, $J(\ind \overline{L})=0\in J(P)$ by Theorem \ref{JacoboCW} showing that $q_k(\ind \overline{L})=0\in H^{2(p-1)k}(P;\mathbb{Z}_p)$. We obtain from \eqref{naturality} and \eqref{naturalitychar}

\begin{align*}
0&=\langle q_k(\ind\overline{L}),\gamma\rangle=\langle q_k(\overline{g}^\ast(\ind L),\gamma\rangle=\langle\overline{g}^\ast q_k(\ind L),\gamma\rangle=\langle g^\ast j^\ast q_k(\ind L),\gamma\rangle\\
&=\langle q_k(\ind L),j_\ast g_\ast\gamma\rangle=\langle q_k(\ind L),j_\ast\beta\rangle=\langle q_k(\ind L),\alpha\rangle
\end{align*}
which is a contradiction to the choice of $\alpha$.\\
Consequently, $\pi_\ast\alpha$ and so $i^\ast\eta\in\check{H}^{n-2(p-1)k}(B(f);\mathbb{Z}_p)$ is non-trivial. This shows that $\dim B(f)\geq n-2(p-1)k$, and moreover $B(f)$ is not contractible to a point as $n-2(p-1)k\geq 1$.


\section{An Example}
In this section we denote by $G_n(\mathbb{R}^{2n})$ the Grassmannian manifold of all $n$-dimensional subspaces of $\mathbb{R}^{2n}$ (cf. \cite[\S 5]{MiSta}). Let us recall that $G_n(\mathbb{R}^{2n})$ is compact and that for every $n\in\mathbb{N}$ there is a canonical vector bundle $\gamma^n(\mathbb{R}^{2n})$ over $G_n(\mathbb{R}^{2n})$ which is called the \textit{tautological bundle} and which has as total space

\[\{(V,u)\in G_n(\mathbb{R}^{2n})\times\mathbb{R}^{2n}:\, u\in V\}.\]
In what follows we denote by $I=[0,1]$ the compact unit interval and we let $\Lambda$ be a compact orientable manifold of dimension $m\geq 2$. We denote by $H^1(I,\mathbb{R}^n)$ the space of all absolutely continuous functions having a square integrable derivative with respect to the usual Sobolev norm

\[\|u\|^2_{H^1}:=\int^1_0{\langle u,u\rangle\, dt}+\int^1_0{\langle u',u'\rangle\, dt},\]
and we let 

\[\varphi:\Lambda\times I\times\mathbb{R}^n\rightarrow\mathbb{R}^n\]
be a continuous function, such that each $\varphi_\lambda:=\varphi(\lambda,\cdot,\cdot):I\times\mathbb{R}^n\rightarrow\mathbb{R}^n$ is smooth, all its derivatives depend continuously on $\lambda$, and $\varphi(\lambda,t,0)=0$ for all $(\lambda,t)\in \Lambda\times I$. For a continuous map $b:\Lambda\rightarrow G_n(\mathbb{R}^{2n})$, we consider the family of differential equations

\begin{equation}\label{diffequ}
\left\{
\begin{aligned}
u'(t)&=\varphi_\lambda(t,u(t)),\quad t\in[0,1]\\
(u(0)&,u(1))\in b(\lambda)
\end{aligned}
\right.
\end{equation}
and we note that $u\equiv 0$ is a solution for all $\lambda\in\Lambda$.

\begin{defi}\label{defibiff}
A parameter value $\lambda_0\in\Lambda$ is called a bifurcation point of \eqref{diffequ} if in every neighbourhood of $(\lambda_0,0)\in \Lambda\times H^1(I,\mathbb{R}^n)$ there is $(\lambda,u)$ such that $u\not\equiv0$ is a solution of \eqref{diffequ}.
\end{defi}
\noindent
In what follows we denote by $B\subset\Lambda$ the set of all bifurcation points. The linearisation of \eqref{diffequ} at $0$ is 

\begin{equation}\label{diffequlin}
\left\{
\begin{aligned}
u'(t)&=(D_0\varphi_\lambda)(t,\cdot)u(t),\quad t\in[0,1]\\
(u(0)&,u(1))\in b(\lambda),
\end{aligned}
\right.
\end{equation}
where $D_0\varphi_\lambda(t,\cdot):\mathbb{R}^n\rightarrow\mathbb{R}^n$ denotes the derivative of $\varphi_\lambda(t,u)$ with respect to $u$ at $0\in\mathbb{R}^n$.\\
We can now state our main theorem of this section.

\begin{theorem}\label{application}
If there is some $\lambda_0\in\Lambda$ such that \eqref{diffequlin} has only the trivial solution and 

\[b^\ast(w_k(\gamma^n(\mathbb{R}^{2n})))\neq 0\in H^k(\Lambda;\mathbb{Z}_2)\]
for some $1\leq k\leq m-1$, then the dimension of $B$ is at least $m-k$ and $B$ is not a contractible topological space.
\end{theorem}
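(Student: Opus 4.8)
The plan is to recast the boundary value problem \eqref{diffequ} as the zero set of a family of $C^1$-Fredholm maps of index zero parameterised by $\Lambda$ and then to apply Theorem~\ref{main}(ii); the crux of the matter is to identify the index bundle of the linearisation \eqref{diffequlin} with the pull-back $b^\ast\gamma^n(\mathbb{R}^{2n})$ in $\widetilde{KO}(\Lambda)$, so that $w_k(\ind L)=b^\ast w_k(\gamma^n(\mathbb{R}^{2n}))$ and the hypothesis of our theorem matches the one required by Theorem~\ref{main}(ii). Passing to the connected component of $\lambda_0$ if necessary, I would assume throughout that $\Lambda$ is connected.

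\textbf{The functional-analytic setting.} I would take $X=H^1(I,\mathbb{R}^n)$, write $\mathrm{ev}\colon X\to\mathbb{R}^{2n}$, $\mathrm{ev}(u)=(u(0),u(1))$, for the bounded surjective evaluation map and $q_\lambda\colon\mathbb{R}^{2n}\to\mathbb{R}^{2n}/b(\lambda)$ for the quotient projection, so that the boundary condition in \eqref{diffequ} reads $q_\lambda(\mathrm{ev}(u))=0$. Since $b\colon\Lambda\to G_n(\mathbb{R}^{2n})$ is continuous, the spaces $L^2(I,\mathbb{R}^n)\oplus(\mathbb{R}^{2n}/b(\lambda))$ form a locally trivial Hilbert space bundle over the compact space $\Lambda$, hence a trivial one by Kuiper's theorem; fixing a trivialisation yields a Banach space $Y$ and a map $f\colon\Lambda\times X\to Y$ which, modulo that trivialisation, sends $(\lambda,u)$ to $(u'-\varphi_\lambda(\cdot,u),\,q_\lambda(\mathrm{ev}(u)))$. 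Using the Sobolev embedding $H^1(I,\mathbb{R}^n)\hookrightarrow C^0(I,\mathbb{R}^n)$ one checks in the usual way that $f$ is a family of $C^1$-Fredholm maps with $f(\lambda,0)=0$, that $f(\lambda,u)=0$ is equivalent to $u$ being a solution of \eqref{diffequ}, that each $L_\lambda:=D_0f_\lambda$ is Fredholm of index $0$, and — by Definition~\ref{defibiff} — that the set $B(f)$ of bifurcation points of $f$ coincides with $B$. These points I expect to be routine.

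\textbf{The index bundle.} This is the heart of the argument. Set $A_\lambda(t):=D_0\varphi_\lambda(t,\cdot)$ and let $D_\lambda\colon X\to L^2(I,\mathbb{R}^n)$, $D_\lambda u=u'-A_\lambda u$. Each $D_\lambda$ is onto, and $\ker D_\lambda$ consists of the solutions of the linear ODE; hence $(\lambda,v)\mapsto\Psi_\lambda(\cdot)v$, with $\Psi_\lambda$ the fundamental solution of $w'=A_\lambda w$, $\Psi_\lambda(0)=\mathrm{id}$, trivialises the bundle $\ker D=\bigcup_\lambda\ker D_\lambda\cong\Theta(\mathbb{R}^n)$. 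Up to the trivialisation of the previous step, $L_\lambda=(D_\lambda,\,q_\lambda\circ\mathrm{ev})$ has surjective first component, and so $\ind L$ equals the index bundle of the family induced on the kernels, which under $\ker D_\lambda\cong\mathbb{R}^n$ is the finite-rank bundle morphism
\[
M\colon\Theta(\mathbb{R}^n)\longrightarrow\Theta(\mathbb{R}^{2n})/b^\ast\gamma^n(\mathbb{R}^{2n}),\qquad M_\lambda(v)=(v,\Psi_\lambda(1)v)\bmod b(\lambda).
\]
Since a bundle morphism between finite-rank bundles $E\to F$ over the compact base $\Lambda$ has index bundle $[E]-[F]$, and since $b^\ast\gamma^n(\mathbb{R}^{2n})\oplus\big(\Theta(\mathbb{R}^{2n})/b^\ast\gamma^n(\mathbb{R}^{2n})\big)\cong\Theta(\mathbb{R}^{2n})$, this gives $\ind L=[b^\ast\gamma^n(\mathbb{R}^{2n})]\in\widetilde{KO}(\Lambda)$, whence $w_k(\ind L)=b^\ast w_k(\gamma^n(\mathbb{R}^{2n}))$ for all $k$. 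It then remains only to apply Theorem~\ref{main}(ii): the assumption that \eqref{diffequlin} has only the trivial solution at $\lambda_0$ means $L_{\lambda_0}$ is injective, hence invertible, and $b^\ast w_k(\gamma^n(\mathbb{R}^{2n}))\neq 0$ for some $1\le k\le m-1$ yields $w_k(\ind L)\neq 0\in H^k(\Lambda;\mathbb{Z}_2)$, so that $\dim B=\dim B(f)\ge m-k$ and $B$ is not contractible to a point.

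\textbf{The main obstacle.} Apart from the routine analytic verifications (the $C^1$- and Fredholm properties, the triviality of $\ker D$), the delicate point is the $\lambda$-dependence of the boundary subspace $b(\lambda)$: one has to set up the reduction to a fixed target $Y$ carefully and to check that the identification of $\ind L$ with the index bundle of the kernel morphism $M$ is compatible with the chosen trivialisations, and one must keep track of signs — the wrong sign would give $w_k(-b^\ast\gamma^n(\mathbb{R}^{2n}))$, which in degrees $k\ge 2$ need not agree with $b^\ast w_k(\gamma^n(\mathbb{R}^{2n}))$. I expect all of this to follow the pattern of the analogous index bundle computations for families of boundary value problems available in the literature.
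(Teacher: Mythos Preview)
Your proposal is correct and reaches the same endpoint as the paper, but by a genuinely dual route. The paper encodes the boundary condition in the \emph{domain}: it forms the Hilbert bundle $\mathcal{H}=\{(\lambda,u):(u(0),u(1))\in b(\lambda)\}\subset\Lambda\times H^1(I,\mathbb{R}^n)$, trivialises it by Kuiper, and works with $f_\lambda(u)=u'-\varphi_\lambda(\cdot,u)$ into the fixed target $L^2(I,\mathbb{R}^n)$. You instead keep the domain fixed at $H^1(I,\mathbb{R}^n)$ and push the boundary condition into a varying \emph{target} $L^2\oplus(\mathbb{R}^{2n}/b(\lambda))$, trivialising that side by Kuiper. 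For the index bundle the paper uses a compact-perturbation step, replacing $L_\lambda$ by $\hat L_\lambda u=u'$ and computing $E(\hat L,Y_1)$ with $Y_1$ the constant functions; this identifies $E(\hat L,Y_1)$ directly with $b^\ast\gamma^n(\mathbb{R}^{2n})$ without ever invoking fundamental solutions. Your computation instead exploits the surjectivity of the first component $D_\lambda$ and the trivialisation of $\ker D$ by $\Psi_\lambda$; note that in your set-up one can simply take $V=0\oplus(\mathbb{R}^{2n}/b(\lambda))$ as the transversal subbundle and read off $E(L,V)=\ker D\cong\Theta(\mathbb{R}^n)$, so the passage through the auxiliary morphism $M$ is not strictly needed. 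Both routes yield $\ind L=[b^\ast\gamma^n(\mathbb{R}^{2n})]-[\Theta^n]$, so your sign worry is unfounded. The paper's compact-perturbation argument buys you independence from the coefficients $A_\lambda$ at the outset; your argument is perhaps more transparent about where the tautological bundle enters.

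One small caveat: ``passing to the connected component of $\lambda_0$'' is not safe, since the non-vanishing class $b^\ast w_k$ might live on a different component and you would lose the hypothesis. The paper tacitly applies Theorem~\ref{main}, which requires $\Lambda$ connected, so connectedness should really be a standing assumption on $\Lambda$ rather than something arranged a posteriori.
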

\noindent
Let us note that $G_n(\mathbb{R}^{2n})$ is itself a compact manifold, which is orientable as Grassmannians $G_n(\mathbb{R}^l)$ are orientable if and only if $l$ is even (cf. \cite{Bartik}). We obtain the following corollary for $\Lambda=G_n(\mathbb{R}^{2n})$ and $b$ the identity.

\begin{cor}\label{Corollary}
If $n\geq 2$ and there is some $V_0\in G_n(\mathbb{R}^{2n})$ such that \eqref{diffequlin} has only the trivial solution, then $B$ is a non-contractible space which has at most codimension $1$ in $G_n(\mathbb{R}^{2n})$.
\end{cor}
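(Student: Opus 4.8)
The plan is to deduce Corollary \ref{Corollary} directly from Theorem \ref{application} by specialising the data. I would set $\Lambda=G_n(\mathbb{R}^{2n})$, take $b$ to be the identity map, and take $m=\dim G_n(\mathbb{R}^{2n})=n^2$. First I would observe that $G_n(\mathbb{R}^{2n})$ is a compact manifold of dimension $n^2\geq 4$ for $n\geq 2$, and that it is orientable because a Grassmannian $G_n(\mathbb{R}^l)$ is orientable precisely when $l$ is even (here $l=2n$), so the orientability hypothesis of Theorem \ref{application} is met. The assumption that \eqref{diffequlin} has only the trivial solution for some $V_0\in G_n(\mathbb{R}^{2n})$ provides the required $\lambda_0$.

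The key remaining point is to exhibit some $k$ with $1\leq k\leq m-1$ for which $b^\ast(w_k(\gamma^n(\mathbb{R}^{2n})))=w_k(\gamma^n(\mathbb{R}^{2n}))\neq 0$ in $H^k(G_n(\mathbb{R}^{2n});\mathbb{Z}_2)$, since $b$ is the identity. For this I would invoke the standard structure of the mod $2$ cohomology of Grassmannians (cf. \cite[\S 7]{MiSta}): the Stiefel-Whitney classes $w_1,\dots,w_n$ of the tautological bundle $\gamma^n(\mathbb{R}^{2n})$ over $G_n(\mathbb{R}^{2n})$ are algebraically independent up to the appropriate degree, and in particular $w_1(\gamma^n(\mathbb{R}^{2n}))\neq 0$ already. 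Taking $k=1$ suffices, and $1\leq 1\leq m-1=n^2-1$ holds for $n\geq 2$. Then Theorem \ref{application} yields that $\dim B\geq m-k=n^2-1$, i.e. $B$ has codimension at most $1$ in $G_n(\mathbb{R}^{2n})$, and that $B$ is not a contractible space.

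I would write this out as a short paragraph: first reduce to checking the hypotheses of Theorem \ref{application}, then record the non-vanishing of $w_1$ of the tautological bundle and the dimension count, and conclude. The main (really the only) obstacle is making sure the non-triviality of a suitable $w_k$ is correctly cited; everything else is an immediate specialisation. One can either cite the general computation of $H^\ast(G_n(\mathbb{R}^{2n});\mathbb{Z}_2)$ as a polynomial algebra on the $w_i(\gamma^n(\mathbb{R}^{2n}))$ modulo a dual relation, or give the cheap direct argument that $w_1(\gamma^n(\mathbb{R}^{2n}))\neq 0$ because $\gamma^n(\mathbb{R}^{2n})$ is non-orientable (being the universal example of a rank-$n$ bundle, it restricts non-orientably to a suitable $\mathbb{RP}^1\subset G_n(\mathbb{R}^{2n})$), which is the form I would use to keep the argument self-contained.
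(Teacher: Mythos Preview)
Your proposal is correct and follows essentially the same route as the paper: specialise Theorem~\ref{application} to $\Lambda=G_n(\mathbb{R}^{2n})$ with $b=\mathrm{id}$, and take $k=1$ using $w_1(\gamma^n(\mathbb{R}^{2n}))\neq 0$. The only difference is in how this non-vanishing is justified: the paper argues via the universal bundle $\gamma^n(\mathbb{R}^\infty)$ (reducing to the M\"obius bundle) together with the fact that $\iota^\ast:H^1(G_n(\mathbb{R}^\infty);\mathbb{Z}_2)\to H^1(G_n(\mathbb{R}^{2n});\mathbb{Z}_2)$ is an isomorphism for $n\geq 2$, which is where the hypothesis $n\geq 2$ is actually used, whereas your suggested restriction-to-$\mathbb{RP}^1$ argument accomplishes the same thing more directly.
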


\noindent
Let us point out that the restriction which we impose on the dimension $n$ in Corollary \ref{Corollary} is satisfied for all equations \eqref{diffequ} that are first-order reductions of second-order scalar equations.\\
In the remainder of this section we will be concerned with the proofs of Theorem \ref{application} and Corollary \ref{Corollary}. We set 

\[\mathcal{H}_\lambda:=\{u\in H^1(I,\mathbb{R}^n):\, (u(0),u(1))\in b(\lambda)\}\]
and we note that each map

\[f_\lambda:\mathcal{H}_\lambda\rightarrow L^2(I,\mathbb{R}^n),\quad f_\lambda(u)=u'-\varphi_\lambda(\cdot,u)\]
is $C^1$. Moreover,

\[(D_vf_\lambda)u=u'-(D_v\varphi_\lambda)u\]   
and we leave it to the reader to check that each $D_vf_\lambda:\mathcal{H}_\lambda\rightarrow L^2(I,\mathbb{R}^n)$ is a Fredholm operator of index $0$.\\
Note that we cannot apply Theorem \ref{main} to the operators $f_\lambda$ as they are not defined on a single Banach space. To overcome this technical issue, we need the following lemma.

\begin{lemma}
The set

\[\mathcal{H}:=\{(\lambda,u)\in\Lambda\times H^1(I,\mathbb{R}^n):\, u\in\mathcal{H}_\lambda\}\]
is a Hilbert subbundle of the product bundle $\Lambda\times H^1(I,\mathbb{R}^n)$.
\end{lemma}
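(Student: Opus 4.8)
The plan is to exhibit $\mathcal{H}$ as a subbundle of the trivial bundle $\Lambda\times H^1(I,\mathbb{R}^n)$ by producing, in a neighbourhood of each $\lambda^\ast\in\Lambda$, a continuous family of bounded projections of $H^1(I,\mathbb{R}^n)$ whose images are exactly the fibres $\mathcal{H}_\lambda=\{u\in H^1(I,\mathbb{R}^n):(u(0),u(1))\in b(\lambda)\}$. Equivalently (and this is how I would organise the argument), it suffices to produce local continuous trivialisations. The key observation is that $\mathcal{H}_\lambda$ is the preimage under the fixed bounded surjection $\mathrm{ev}:H^1(I,\mathbb{R}^n)\to\mathbb{R}^n\oplus\mathbb{R}^n$, $u\mapsto(u(0),u(1))$, of the $n$-dimensional subspace $b(\lambda)\subset\mathbb{R}^{2n}$. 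So the whole problem reduces to the finite-dimensional statement that the tautological bundle $\gamma^n(\mathbb{R}^{2n})$ — whose fibre over $b(\lambda)$ is exactly $b(\lambda)$ — is a subbundle of $G_n(\mathbb{R}^{2n})\times\mathbb{R}^{2n}$, pulled back along the continuous map $b:\Lambda\to G_n(\mathbb{R}^{2n})$, together with the fact that $\mathrm{ev}$ is a split surjection of Banach spaces.

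Concretely, I would proceed as follows. First, fix once and for all a bounded right inverse $S:\mathbb{R}^{2n}\to H^1(I,\mathbb{R}^n)$ of $\mathrm{ev}$ (e.g. send $(a,b)$ to the affine path $t\mapsto (1-t)a+tb$); write $H^1(I,\mathbb{R}^n)=\ker(\mathrm{ev})\oplus \im(S)$ with $\ker(\mathrm{ev})=\{u: u(0)=u(1)=0\}$, a fixed closed subspace independent of $\lambda$. Second, fix $\lambda^\ast\in\Lambda$ and let $W=b(\lambda^\ast)\in G_n(\mathbb{R}^{2n})$. Choose a complement $W'$ with $\mathbb{R}^{2n}=W\oplus W'$, and on the standard neighbourhood $U$ of $W$ in the Grassmannian consisting of those $n$-planes that project isomorphically onto $W$ along $W'$, there is a continuous map $U\to\Hom(W,W')$, $V\mapsto\phi_V$, with $V=\{w+\phi_V w: w\in W\}$; the corresponding family of projections $\Pi_V:\mathbb{R}^{2n}\to\mathbb{R}^{2n}$ onto $V$ (along $W'$ composed with the graph identification, or more cleanly along a complement varying continuously with $V$) depends continuously on $V$ in operator norm. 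Pulling back along $b$ over $b^{-1}(U)$ and conjugating through $S$, set $P_\lambda:=I_{\ker\mathrm{ev}}\oplus\big(S\,\Pi_{b(\lambda)}\,\mathrm{ev}|_{\im S}\big)$, a bounded projection on $H^1(I,\mathbb{R}^n)$ depending continuously on $\lambda$, whose image is $\ker(\mathrm{ev})\oplus S(b(\lambda))=\mathcal{H}_\lambda$. Third, the map $(\lambda,u)\mapsto(\lambda,P_\lambda u)$ followed by an obvious linear isomorphism onto $b^{-1}(U)\times\big(\ker(\mathrm{ev})\oplus W\big)$ gives the desired local trivialisation of $\mathcal{H}$; since the fibres all have the same (infinite) dimension and the transition maps are continuous families of bounded isomorphisms, $\mathcal{H}$ is a Hilbert subbundle.

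The only genuinely delicate point is the continuity in operator norm of the family of projections $\Pi_{b(\lambda)}$, i.e. local triviality of the tautological bundle; but this is classical (cf. \cite[\S 5]{MiSta}) and amounts to the explicit graph-chart computation sketched above, so I expect no real obstacle. The one bookkeeping subtlety worth stating carefully is that $\mathrm{ev}:H^1(I,\mathbb{R}^n)\to\mathbb{R}^{2n}$ is bounded and surjective with a bounded right inverse — which is immediate from the Sobolev embedding $H^1(I,\mathbb{R}^n)\hookrightarrow C(I,\mathbb{R}^n)$ — so that the splitting used throughout is legitimate. Everything else is a routine patching argument.
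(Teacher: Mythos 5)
Your proposal is correct and takes essentially the same route as the paper: reduce to the tautological bundle via the evaluation map $\mathrm{ev}(u)=(u(0),u(1))$, use the affine-path right inverse $S(a,b)(t)=(1-t)a+tb$ to split $H^1(I,\mathbb{R}^n)=\ker(\mathrm{ev})\oplus\im(S)$, and transport a family of finite-dimensional projections onto the fibres of $\gamma^n(\mathbb{R}^{2n})$ up to projections on $H^1$ with image $\mathcal{H}_\lambda$ (indeed, writing $u=u_0+S(w)$ shows your $P_\lambda=I_{\ker\mathrm{ev}}\oplus S\,\Pi_{b(\lambda)}\,\mathrm{ev}|_{\im S}$ is literally $u\mapsto u-S\bigl((I-\Pi_{b(\lambda)})\mathrm{ev}(u)\bigr)$, which is the paper's formula). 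The one place you work harder than necessary is the "delicate point'' at the end: rather than building $\Pi_V$ locally via graph charts and then patching, the paper simply takes a single globally continuous family $\widetilde{P}:G_n(\mathbb{R}^{2n})\times\mathbb{R}^{2n}\to\mathbb{R}^{2n}$ of projections onto the tautological fibres (orthogonal projection does the job) and defines $P_\lambda$ by one global formula, then invokes the subbundle criterion from Lang; this eliminates the local-to-global step entirely and makes the proof a short verification that $P_\lambda^2=P_\lambda$ and $\im P_\lambda=\mathcal{H}_\lambda$.
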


\begin{proof}
Let $\widetilde{P}:G_n(\mathbb{R}^{2n})\times\mathbb{R}^{2n}\rightarrow\mathbb{R}^{2n}$ be a family of projections in $\mathbb{R}^{2n}$ such that $\im(\widetilde{P}_V)$ is the fiber of $\gamma^n(\mathbb{R}^{2n})$ over $V$ for $V\in G_n(\mathbb{R}^{2n})$. We consider the family 

\[P:\Lambda\times H^1(I,\mathbb{R}^n)\rightarrow H^1(I,\mathbb{R}^n)\]
defined by

\[(P_\lambda u)(t)=u(t)-(1-t)P_1(I_{\mathbb{R}^{2n}}-\widetilde{P}_{b(\lambda)})(u(0),u(1))-tP_2(I_{\mathbb{R}^{2n}}-\widetilde{P}_{b(\lambda)})(u(0),u(1)),\]
where $P_1,P_2:\mathbb{R}^{2n}\rightarrow\mathbb{R}^n$ are the projections on the first and last $n$ components in $\mathbb{R}^{2n}$, respectively, and $I_{\mathbb{R}^{2n}}$ is the identity in $\mathbb{R}^{2n}$. The reader can easily check that $P^2_\lambda=P_\lambda$ and $\im(P_\lambda)=\mathcal{H}_\lambda$, i.e. $P_\lambda$ is a projection in $H^1(I,\mathbb{R}^n)$ onto $\mathcal{H}_\lambda$. Hence

\[\{(\lambda,u)\in\Lambda\times H^1(I,\mathbb{R}^{2n}):\, u\in\im(P_\lambda)\}=\mathcal{H}\]
is a subbundle of $\Lambda\times H^1(I,\mathbb{R}^{2n})$ by \cite[\S III.3]{Lang}.
\end{proof}

\noindent
As every Hilbert bundle over a finite $CW$-complex is trivial by Kuiper's Theorem and \cite[p. 54]{Steenrod}, there exists a bundle isomorphism $\psi:\Lambda\times H\rightarrow\mathcal{H}$ for some Hilbert space $H$. Moreover, by \cite[Thm. VII.3.1]{Lang} we can assume without loss of generality that each $\psi_\lambda:H\rightarrow\mathcal{H}_\lambda$ is orthogonal and so an isometry. Composing $\psi$ and $f$, we obtain a family of $C^1$ maps

\[\tilde{f}:=f\circ\psi:\Lambda\times H\rightarrow L^2(I,\mathbb{R}^n)\]
such that $f(\lambda,0)=0$ for all $\lambda\in\Lambda$, and each $\tilde{f}_\lambda:H\rightarrow L^2(I,\mathbb{R}^n)$ is Fredholm of index $0$. Consequently, we can now apply Theorem \ref{main} to the family $\tilde{f}$ and so we need to compute $\ind(\widetilde{L})\in\widetilde{KO}(\Lambda)$, where $\widetilde{L}_\lambda=D_0\widetilde{f}_\lambda$, $\lambda\in\Lambda$. By the chain rule, we get

\[\widetilde{L}_\lambda=(D_{\psi_\lambda(0)}f_\lambda)(D_0\psi_\lambda)=(D_0f_\lambda)\psi_\lambda.\]
Consequently, if we introduce two Hilbert bundle morphisms by

\[M:\Lambda\times H\rightarrow\mathcal{H},\quad M_\lambda u=\psi_\lambda u\]
and

\[L:\mathcal{H}\rightarrow\Lambda\times L^2(I,\mathbb{R}^n),\quad L_\lambda u=(D_0f_\lambda) u=u'-(D_0\varphi_{\lambda})u,\] 
then $\widetilde{L}_\lambda=L_\lambda M_\lambda$, $\lambda\in\Lambda$.\\
The construction of the index bundle, which we recalled in Section 2.1, generalises almost verbatim to Fredholm morphisms of Banach bundles (cf. \cite{indbundleIch}). Accordingly, if $\mathcal{X}$, $\mathcal{Y}$ are Banach bundles over a compact base $\Lambda$, and $A:\mathcal{X}\rightarrow\mathcal{Y}$ is a bundle morphism such that every $A_\lambda:\mathcal{X}_\lambda\rightarrow\mathcal{Y}_\lambda$ is Fredholm, then 

\[\ind(A)=[E(A,\mathcal{V})]-[\mathcal{V}]\in\widetilde{KO}(\Lambda),\] 
where $\mathcal{V}\subset\mathcal{Y}$ is a finite dimensional subbundle such that

\begin{align}\label{subbundle}
\im(A_\lambda)+\mathcal{V}_\lambda=\mathcal{Y}_\lambda,\quad \lambda\in\Lambda.
\end{align}
As the index bundle is additive under compositions of bundle morphisms, and it is trivial for bundle isomorphisms, we obtain

\begin{align}\label{indequI}
\ind(\widetilde{L})=\ind(LM)=\ind(L)+\ind(M)=\ind(L)\in\widetilde{KO}(\Lambda).
\end{align}
Let us now consider the Fredholm morphism

\[\hat{L}:\mathcal{H}\rightarrow \Lambda\times L^2(I,\mathbb{R}^n),\quad \hat{L}_\lambda u=u'.\]
Then $L_\lambda-\hat{L}_\lambda:\mathcal{H}_\lambda\rightarrow L^2(I,\mathbb{R}^n)$ is compact for all $\lambda\in\Lambda$ and so 

\begin{align}\label{indequII}
\ind(L)=\ind(\hat{L})\in\widetilde{KO}(\Lambda)
\end{align}
by \cite[Cor. 7]{indbundleIch}.\\
Our next aim is to compute $\ind(\hat{L})$. Let us denote by $Y_1$ the $n$-dimensional subspace of constant functions in $L^2(I,\mathbb{R}^{n})$ and

\[Y_2:=\left\{u\in L^2(I,\mathbb{R}^n):\, \int^1_0{u(t)\, dt}=0\right\}.\] 
Clearly, $Y_1\cap Y_2=\{0\}$, and as every function $u\in L^2(I,\mathbb{R}^n)$ can be written as 

\[u(t)=(u(t)-\int^1_0{u(s)\,ds})+\int^1_0{u(s)\,ds},\]
we see that $L^2(I,\mathbb{R}^n)=Y_1\oplus Y_2$. If $v\in Y_2$, then

\[u(t)=\int^t_0{v(s)\,ds},\quad t\in I,\] 
is an element of $\mathcal{H}_\lambda$ which is mapped to $v$. Consequently, $\im(\hat{L}_\lambda)+Y_1=L^2(I,\mathbb{R}^n)$ for all $\lambda\in\Lambda$ and we get that

\begin{align*}
E(\hat{L},Y_1)&=\{(\lambda,u)\in \Lambda\times H^1(I,\mathbb{R}^n): u\in\mathcal{H}_\lambda,\,\hat{L}_\lambda u\in Y_1\}\\
&=\{(\lambda,u)\in\Lambda\times H^1(I,\mathbb{R}^n): u(t)=(1-t)a+tb,\, (a,b)\in b(\lambda)\}\\
&\cong\{(\lambda,a,b)\in\Lambda\times \mathbb{R}^{2n}:  (a,b)\in b(\lambda)\}.  
\end{align*}
As this vector bundle is isomorphic to the pullback $b^\ast(\gamma^n(\mathbb{R}^{2n}))$ of the tautological bundle $\gamma^n(\mathbb{R}^{2n})$ over $G_n(\mathbb{R}^{2n})$ by $b$, we finally obtain by \eqref{indequI} and \eqref{indequII}

\begin{align*}
\ind(\widetilde{L})&=\ind(L)=\ind(\hat L)=[E(\hat{L},Y_1)]-[Y_1]\\
&=[b^\ast(\gamma^n(\mathbb{R}^{2n}))]-[\Theta^n]\in\widetilde{KO}(\Lambda),
\end{align*}
where $\Theta^n$ denotes the product bundle with fiber $\mathbb{R}^n$ over $\Lambda$. Hence,

\[w_k(\ind(\widetilde{L}))=w_k(b^\ast(\gamma^n(\mathbb{R}^{2n})))=b^\ast w_k(\gamma^n(\mathbb{R}^{2n}))\neq 0\in H^k(\Lambda;\mathbb{Z}_2)\]
by the assumption of Theorem \ref{application}. Moreover, as \eqref{diffequlin} has only the trivial solution for $\lambda=\lambda_0$, we see that the operator $L_{\lambda_0}:\mathcal{H}_{\lambda_0}\rightarrow L^2(I,\mathbb{R}^n)$ is injective and so invertible as it is Fredholm of index $0$. Since $\psi_{\lambda_0}:H\rightarrow\mathcal{H}_{\lambda_0}$ is invertible, we conclude that $\widetilde{L}_{\lambda_0}$ is invertible as well. Consequently, we obtain from Theorem \ref{main} that the family of $C^1$-Fredholm maps $\widetilde{f}$ has a bifurcation point $\lambda^\ast\in\Lambda$, i.e. there is a sequence $\{(\lambda_n,u_n)\}_{n\in\mathbb{N}}\subset\Lambda\times H$ converging to $(\lambda^\ast,0)$ such that $u_n\neq 0$ and $\widetilde{f}_{\lambda_n}(u_n)=f_{\lambda_n}(\psi_{\lambda_n}u_n)=0$ for all $n\in\mathbb{N}$. We now set $v_n:=\psi_{\lambda_n}u_n\neq 0\in\mathcal{H}_{\lambda_n}$ for $n\in\mathbb{N}$. As $\psi_{\lambda_n}:H\rightarrow\mathcal{H}_{\lambda_n}$ is an isometry, it follows that $\|v_n\|_{H^1}=\|u_n\|_{H}\rightarrow 0$ as $n\rightarrow\infty$, where $\|\cdot\|_H$ is the norm of $H$. Since $v_n\neq 0$ is a solution of \eqref{diffequ} by the definition of $f$, we obtain that $\lambda^\ast$ is a bifurcation point of \eqref{diffequ} in the sense of Definition \ref{defibiff}, and so Theorem \ref{application} is proved.\\
Let us now focus on the proof of Corollary \ref{Corollary}. By Theorem \ref{main} we only need to recall the well known fact that $w_1(\gamma^n(\mathbb{R}^{2n}))\neq 0\in H^1(G_n(\mathbb{R}^{2n});\mathbb{Z}_2)$. Indeed, if we consider the tautological bundle $\gamma^n(\mathbb{R}^\infty)$ over the infinite Grassmannian $G_n(\mathbb{R}^\infty)$, then by \cite[Thm. 5.6]{MiSta} for every $n$-dimensional vector bundle $\xi$ there is a bundle map $g:\xi\rightarrow\gamma^n(\mathbb{R}^\infty)$. Hence $w_1(\xi)=\overline{g}^\ast w_1(\gamma^n(\mathbb{R}^\infty))$, where $\overline{g}$ is the map between the base space of $\xi$ and $G_n(\mathbb{R}^\infty)$ that is induced by $g$. Hence if $w_1(\gamma^n(\mathbb{R}^\infty))=0$, then $w_1(\xi)$ would be trivial for every $n$-dimensional bundle $\xi$. However, this is wrong as if we set $\xi=\gamma^1(\mathbb{R}^2)\oplus\Theta^{n-1}$, where $\Theta^{n-1}$ is the product bundle with fiber $\mathbb{R}^{n-1}$ over $G_1(\mathbb{R}^2)$, then 

\[w_1(\xi)=w_1(\gamma^1(\mathbb{R}^{2}))+w_1(\Theta^{n-1})=w_1(\gamma^1(\mathbb{R}^{2})),\]
and the latter class is non-trivial as $\gamma^1(\mathbb{R}^2)$ is isomorphic to the Möbius bundle over $S^1$, which is non-orientable and so has a non-trivial first Stiefel-Whitney class.\\
If we now let $\iota:G_n(\mathbb{R}^{2n})\hookrightarrow G_n(\mathbb{R}^\infty)$ be the canonical inclusion map, then $\iota^\ast(\gamma^n(\mathbb{R}^\infty))$ is isomorphic to $\gamma^n(\mathbb{R}^{2n})$ and we obtain    

\[\iota^\ast w_1(\gamma^n(\mathbb{R}^\infty))=w_1(\iota^\ast(\gamma^n(\mathbb{R}^\infty)))=w_1(\gamma^n(\mathbb{R}^{2n}))\in H^1(G_n(\mathbb{R}^{2n});\mathbb{Z}_2).\] 
As $\iota^\ast:H^1(G_n(\mathbb{R}^\infty);\mathbb{Z}_2)\rightarrow H^1(G_n(\mathbb{R}^{2n});\mathbb{Z}_2)$ is an isomorphism for $n\geq 2$ by \cite[Problem 6-B]{MiSta}, we conclude that $w_1(\gamma^n(\mathbb{R}^{2n}))$ is non-trivial, and so Corollary \ref{Corollary} is proved.

\thebibliography{9999999}

\bibitem[Al78]{Alexander} J.C. Alexander, \textbf{Bifurcation of zeroes of parametrized functions}, J. Funct. Anal. \textbf{29}, 1978, 37--53


\bibitem[At89]{AtiyahK} M.F. Atiyah, \textbf{K-theory}, Notes by D. W. Anderson, Second edition, Advanced Book Classics. Addison-Wesley Publishing Company, Advanced Book Program, Redwood City, CA,  1989

\bibitem[BK84]{Bartik} V. Bart\'ik, J. Korba\v{s}, \textbf{Stiefel-Whitney characteristic classes and parallelizability of Grassmann manifolds}, Proceedings of the 12th winter school on abstract analysis (Srn\'i, 1984), Rend. Circ. Mat. Palermo (2)  1984,  Suppl. No. 6, 19--29

\bibitem[Ba88]{Bartsch} T. Bartsch, \textbf{The role of the J-homomorphism in multiparameter bifurcation theory}, Bull. Sci. Math. (2)  \textbf{112},  1988, 177--184

\bibitem[Br93]{Bredon} G.E. Bredon, \textbf{Topology and Geometry}, Graduate Texts in Mathematics \textbf{139}, Springer, 1993

\bibitem[FP91]{FiPejsachowiczII} P.M. Fitzpatrick, J. Pejsachowicz, \textbf{Nonorientability of the Index Bundle and Several-Parameter Bifurcation}, J. Funct. Anal. \textbf{98}, 1991, 42-58

\bibitem[HW48]{Hurewicz} W. Hurewicz, H. Wallmann, \textbf{Dimension Theory}, Princeton Mathematical Series \textbf{4}, Princeton University Press, 1948

\bibitem[Iz76]{Ize} J. Ize, \textbf{Bifurcation theory for Fredholm operators}, Mem. Amer. Math. Soc. \textbf{7}, 1976, no. 174


\bibitem[La95]{Lang} S. Lang, \textbf{Differential and Riemannian manifolds}, Third edition, Graduate Texts in Mathematics \textbf{160}, Springer-Verlag, New York,  1995

\bibitem[Ma99]{May} J.P. May, \textbf{A Concise Course in Algebraic Topology}, Chicago University Press, 2nd edition, 1999

\bibitem[MS74]{MiSta} J.W. Milnor, J.D. Stasheff, \textbf{Characteristic Classes}, Princeton University Press, 1974

\bibitem[Pe88]{JacoboK} J. Pejsachowicz, \textbf{K-theoretic methods in bifurcation theory}, Fixed point theory and its applications (Berkeley, CA, 1986), 193--206, Contemp. Math., 72, Amer. Math. Soc., Providence, RI,  1988

\bibitem[Pe01]{JacoboTMNA0} J. Pejsachowicz, \textbf{Index bundle, Leray-Schauder reduction and bifurcation of solutions of nonlinear elliptic boundary value problems}, Topol. Methods Nonlinear Anal. \textbf{18}, 2001, 243--267

\bibitem[Pe08]{JacoboC} J. Pejsachowicz, \textbf{Topological invariants of bifurcation}, $C^\ast$-algebras and elliptic theory II, 239--250, Trends Math., Birkhäuser, Basel,  2008

\bibitem[Pe11a]{JacoboTMNAI} J. Pejsachowicz, \textbf{Bifurcation of Fredholm maps I. The index bundle and bifurcation}, Topol. Methods Nonlinear Anal. \textbf{38},  2011, 115--168

\bibitem[Pe11b]{JacoboTMNAII} J. Pejsachowicz, \textbf{Bifurcation of Fredholm maps II. The dimension of the set of
 bifurcation points}, Topol. Methods Nonlinear Anal. \textbf{38},  2011, 291--305

\bibitem[Pe15]{JacoboJFPTA} J. Pejsachowicz, \textbf{The index bundle and bifurcation from infinity of solutions of nonlinear elliptic boundary value problems}, J. Fixed Point Theory Appl.  \textbf{17},  2015, 43--64

\bibitem[SW15]{MaciejIch} M. Starostka, N. Waterstraat, \textbf{A remark on singular sets of vector bundle morphisms},  Eur. J. Math. \textbf{1}, 2015, 154--159

\bibitem[St51]{Steenrod} N. Steenrod, \textbf{The Topology of Fibre Bundles}, Princeton Mathematical Series, vol. 14. Princeton University Press, Princeton, N. J.,  1951

\bibitem[Wa11]{indbundleIch} N. Waterstraat, \textbf{The index bundle for Fredholm morphisms}, Rend. Sem. Mat. Univ. Politec. Torino \textbf{69}, 2011, 299--315


\newpage

\vspace{1cm}
Nils Waterstraat\\
School of Mathematics,\\
Statistics \& Actuarial Science\\
University of Kent\\
Canterbury\\
Kent CT2 7NF\\
UNITED KINGDOM\\
E-mail: n.waterstraat@kent.ac.uk

\end{document}